\newtheorem{Thm}{Theorem}
\newtheorem{Conj}[Thm]{Conjecture}
\newtheorem{Def}[Thm]{Definition}
\newtheorem{Def/Thm}[Thm]{Definition/Theorem}
\newtheorem{Lemma}[Thm]{Lemma}
\theoremstyle{remark}
\newtheorem{Rmk}[Thm]{Remark}
\newcommand{\PP }{{\mathbb P}}
\newcommand{\CC }{{\mathbb C}}
\newcommand{\ZZ }{{\mathbb Z}}
\begin{document}

\title[Note on equivariant $I$-function of local $\mathds{P}^n$]
{Note on equivariant $I$-function of local $\mathds{P}^n$}

\author{Hyenho Lho}
\address{Department of Mathematics, ETH Z\"urich}
\email {hyenho.lho@math.ethz.ch}
\date{July 2018}.

\begin{abstract} Several properties of a hyepergeometric series related to Gromov-Witten theory of some Calabi-Yau geometries was studied in \cite{ZaZi}. These properties play basic role in the study of higher genus Gromov-Witten theories. We extend the results of \cite{ZaZi} to equivariant setting for the study of higher genus equivariant Gromov-Witten theories of some Calabi-Yau geometries.

\end{abstract}

\maketitle

\setcounter{tocdepth}{1} 
\tableofcontents

\setcounter{section}{-1}

\section{Introduction}
\subsection{Local $\PP^n$ geometries}\label{LPPN}
Equivariant local $\PP^n$ theories can be constructed as follows. Let the algebraic torus
$$\mathsf{T}_{n+1}=(\CC^*)^{n+1}$$
act with the standard linearization on $\PP^n$ with weights $\lambda_0,\dots,\lambda_n$ on the vector space $H^0(\PP^n,\mathcal{O}_{\PP^n}(1))$.
 Let $\overline{M}_g(\PP^n,d)$ be the moduli space of stable maps to $\PP^n$ equipped with the canonical $\mathsf{T}_{n+1}$-action, and let
 $$\mathsf{C}\rightarrow\overline{M}_g(\PP^n,d)\,,\,\,f:\mathcal{C}\rightarrow\PP^n\,,\,\,\mathsf{S}=f^*\mathcal{O}_{\PP^n}(-1)\rightarrow\mathsf{C}$$
be the standard universal structures. 

The equivariant Gromov-Witten invariants of the local $\PP^n$ are defined via the equivariant integrals
\begin{align}\label{GW}
    N_{g,d}^{\mathsf{GW},n}=\int_{[\overline{M}_g(\PP^n,d)]^{\text{vir}}}e\Big(-R\pi_*f^* \mathcal{O}_{\PP^n}(-n-1)\Big)\,.
\end{align}
The integral \eqref{GW} defines a rational function in $\lambda_i$
$$N_{g,d}^{\mathsf{GW},n}\in \CC(\lambda_0,\dots,\lambda_n)\,.$$
We associate Gromov-Witten generating series by
$$\mathcal{F}^{\mathsf{GW},n}_g(Q)\, =\, 
\sum_{d=0}^\infty \widetilde{N}_{g,d}^{\mathsf{GW}} Q^d\, 
\in \, \CC(\lambda_0,\dots,\lambda_n)[[Q]] \, .$$
Motivated by mirror symmetry (\cite{ASYZ,BCOV,ObPix}), we can make the following predictions about the genus $g$ generating series $\mathcal{F}^{\mathsf{GW},n}_g$. 
\begin{itemize}
    \item[(A)] There exist a finitely generated subring 
    $$\mathds{G}\in\CC(\lambda_0,\dots,\lambda_n)[[Q]]$$
    which contains $\mathcal{F}_g^{\mathsf{GW},n}$ for all $g$.
    \item[(B)] The series $\mathcal{F}_g^{\mathsf{GW},n}$ satisfy {\em holomorphic anomaly equations}, i.e. recursive formulas for the derivative of $\mathcal{F}_g^{\mathsf{GW},n}$ with respect to some generators in $\mathds{G}$.
\end{itemize}

\subsection{$I$-function}
$I$-fucntion defined by
$$I_n=\sum_{d=0}^{\infty}\frac{\prod_{k=1}^{(n+1)d-1}(-(n+1)H-kz)}{\prod_{i=0}^n \prod_{k}^d(H+kz-\lambda_i)}q^d\in H_\mathsf{T}^*(\PP^n,\CC)[[q]]\,,$$
is the central object in the study of Gromov-Witten invariants of local $\PP^n$ geometry. See \cite{LP}, \cite{LP2} for the arguments. Several important properties of the function $I_n$ was studied in \cite{ZaZi} after the specialization
\begin{align}\label{sp}
    \lambda_i=\zeta_{n+1}^i\,
\end{align}
where $\zeta_{n+1}$ is primitive $(n+1)$-th root of unity. For the study of full equivariant Gromov-Witten theories, we extend the result of \cite{ZaZi} without the specialization \eqref{sp}.

\subsection{Picard-Fuchs equation and Birkhoff factorization}
Define differential operators
\begin{align*}
    \mathsf{D}=q \frac{d}{dq}\,,\,\,\,M=H+z\mathsf{D}\,.
\end{align*}
The function $I_n$ satisfies following Picard-Fuchs equation
\begin{align*}
    \Big(\prod_{i=0}^n\Big(M-\lambda_i\Big)-q\prod_{k=0}^{n}\Big(-(n+1)M-kz\Big)\Big)I_n=0\,.
\end{align*}

The restriction $I_n|_{H=\lambda_i}$ admits following asymptotic form
\begin{align}\label{asymp}
    I_n|_{H=\lambda_i}=e^{\frac{\mu}{z}}\Big( R_{0,i}+R_{1,i}z+R_{2,i}z^2+\dots \Big)
\end{align}
with series $\mu_i\,,\,R_{k,i}\in \CC(\lambda_0,\dots,\lambda_n)[[q]]$.

 A derivation of \eqref{asymp} is obtained from \cite[Theorem 5.4.1]{CKg0} and the uniqueness lemma in \cite[Section 7.7]{CKg0}. The series $\mu_i$ and $R_{k,i}$ are found by solving defferential equations obtained from the coefficient of $z^k$. For example,
 \begin{align*}\lambda_i+\mathsf{D}\mu_i= L_i\,,
 \end{align*}
where $L_i(q)$ is the series in $q$ defined by the root of following degree $(n+1)$ polynomial in $\mathcal{L}$
\begin{align*}
    \prod_{i=0}^{n}(\mathcal{L}-\lambda_i)-(-1)^{n+1}q \mathcal{L}^{n+1}\,.\,
\end{align*}
with initial conditions,
$$\mathcal{L}_i(0)=\lambda_i\,.$$

Let $f_n$ be the polynomial of degree $n$ in variable $x$ over $\CC(\lambda_0,\dots,\lambda_n)$ defined by
\begin{align*}
    f_n(x):=\sum_{k=0}^{n} (-1)^{k}k s_{k+1} x^{n-k}\,,
\end{align*}
where $s_k$ is $k$-th elementary symmetric function in $\lambda_0,\dots,\lambda_n$. The ring
\begin{align*}
    \mathds{G}_n:=\CC(\lambda_0,\dots,\lambda_n)[L_0^{\pm 1},\dots,L_n^{\pm 1},f_n(L_0)^{-\frac{1}{2}},\dots,f_n(L_n)^{-\frac{1}{2}}]
\end{align*}
will play a basic role. 

The following Conjecture was proven under the specializaiton \eqref{sp} in \cite[Theorem 4]{ZaZi}.

\begin{Conj}\label{MC}
 For all $k\ge 0$, we have
$$R_{k,i}\in \mathds{G}_n\,.$$
\end{Conj}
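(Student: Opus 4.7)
My strategy is induction on $k$, with three main components: a recursion for $R_{k,i}$ derived from the Picard--Fuchs equation, an explicit closed form for $R_{0,i}$, and closure of $\mathds{G}_n$ under $\mathsf{D}$.

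First, I would substitute $I_n|_{H=\lambda_i} = e^{\mu_i/z}\sum_{k\ge 0}R_{k,i}z^k$ into the Picard--Fuchs equation and conjugate away the $e^{\mu_i/z}$ factor. Using $\lambda_i + \mathsf{D}\mu_i = L_i$, each factor $M-\lambda_j$ becomes $(L_i - \lambda_j) + z\mathsf{D}$ acting on $F = \sum R_{k,i}z^k$, and $-(n+1)M-kz$ becomes $-(n+1)L_i - (n+1)z\mathsf{D} - kz$. Matching at $z^0$ reproduces the defining polynomial $P(L_i) = (-1)^{n+1}qL_i^{n+1}$ of $L_i$. The $z^1$ identity yields a first-order linear ODE for $R_{0,i}$ of the form $\mathsf{D}R_{0,i} = -\tfrac{1}{2}(\mathsf{D}\log f_n(L_i))R_{0,i}$ (up to a factor involving a power of $L_i$), which integrates to $R_{0,i} = c\,L_i^a\,f_n(L_i)^{-1/2} \in \mathds{G}_n$. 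For each $m\ge 2$, the $z^m$ identity becomes a first-order linear ODE for $R_{m-1,i}$ whose inhomogeneous term is a $\mathds{G}_n$-rational combination of $R_{0,i},\dots,R_{m-2,i}$ and their iterated $\mathsf{D}$-derivatives; granted closure under $\mathsf{D}$, integration gives $R_{m-1,i}\in\mathds{G}_n$.

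The key technical step is closure of $\mathds{G}_n$ under $\mathsf{D}$. Implicit differentiation of $\prod_j(L_i - \lambda_j) = (-1)^{n+1}qL_i^{n+1}$ gives
\[
\mathsf{D}L_i = \frac{L_iP(L_i)}{L_iP'(L_i) - (n+1)P(L_i)}.
\]
Combining the polynomial identity $xP'(x) - (n+1)P(x) = f_n(x) + x^{n+1} - P(x)$ with the relation
\[
1 - (-1)^{n+1}q = \frac{s_{n+1}}{\prod_{j} L_j},
\]
obtained by evaluating $(1 - (-1)^{n+1}q)\prod_j(L-L_j) = P(L) - (-1)^{n+1}qL^{n+1}$ at $L = 0$, one rewrites the denominator as a $\mathds{G}_n$-unit multiple of $f_n(L_i)$. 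Hence $\mathsf{D}L_i\in\mathds{G}_n$, and by chain and Leibniz rules also $\mathsf{D}f_n(L_i)$ and $\mathsf{D}(f_n(L_i)^{-1/2})$ lie in $\mathds{G}_n$, completing the closure. Induction on $k$ then finishes the argument.

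The principal obstacle is this closure step---specifically, identifying the precise multivariate algebraic identity that forces $L_iP'(L_i) - (n+1)P(L_i)$ to coincide with a $\mathds{G}_n$-unit multiple of $f_n(L_i)$. Under Zagier--Zinger's specialization $\lambda_i = \zeta_{n+1}^i$, one has $s_1 = \cdots = s_n = 0$ and $s_{n+1} = (-1)^n$, so both $f_n(L_i)$ and $L_iP'(L_i) - (n+1)P(L_i)$ collapse to constants and closure is immediate. In the equivariant setting the $L_i$ are genuinely independent roots of the deformed polynomial $P(L) - (-1)^{n+1}qL^{n+1}$, and one must exploit the full interplay between $f_n(L_i)$ and the Vandermonde product $\prod_{j\ne i}(L_i - L_j)$---precisely what the adjunction of $f_n(L_i)^{-1/2}$ to $\mathds{G}_n$ is designed to handle.
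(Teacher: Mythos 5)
There is a genuine gap, and it sits exactly where the whole difficulty of the statement lies: the integration step. After substituting the asymptotic expansion into the Picard--Fuchs equation, the $z^m$ coefficient gives (after your normalization) an equation of the form $\mathsf{D}\Phi_{m-1,i}=(\text{known combination of }\Phi_{j,i},\ j<m-1)$, so $\Phi_{m-1,i}$ is an \emph{antiderivative} of an element of the ring. Closure of $\mathds{G}_n$ under $\mathsf{D}$ does not imply closure under antidifferentiation -- for instance, integrating a term proportional to $f_n(L_i)^{-1}\,\mathsf{D}_i f_n(L_i)$ produces a logarithm -- so your sentence ``granted closure under $\mathsf{D}$, integration gives $R_{m-1,i}\in\mathds{G}_n$'' is a non sequitur. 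Establishing that the specific right-hand sides arising from the Picard--Fuchs recursion never produce such non-ring antiderivatives is precisely the content of the statement; it is why the paper leaves it as Conjecture \ref{MC} in general and proves it only for $n=1$ (Section \ref{PP1}) and for $n=2$ under the specialization \eqref{spl2} (Section \ref{PP2}), by verifying the admissibility conditions \eqref{D1C}: an explicit bookkeeping of pole orders of the recursion coefficients $A_{lp}$ with respect to $f$, which by induction controls the order of each $\Phi_{k,i}$ and rules out the obstructing $f^{-1}$-type terms. That mechanism only works when $f_n$ is (a power of) a linear form, which is exactly what \eqref{spl2}, or the Zagier--Zinger specialization \eqref{sp}, buys.

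You also misidentify the ``principal obstacle.'' The closure of $\mathds{G}_n$ under $\mathsf{D}$ is not the hard part: with $f_n(x)=xP'(x)-(n+1)P(x)$ (the normalization actually used in the paper, e.g.\ $f_1(x)=s_1x-2s_2$), differentiating the defining relation $P(L_i)=(-1)^{n+1}qL_i^{n+1}$ gives directly $\mathsf{D}L_i=(-1)^{n+1}qL_i^{n+2}/f_n(L_i)\in\mathds{G}_n$, with no further identity needed; your worry about rewriting $L_iP'(L_i)-(n+1)P(L_i)$ as a unit multiple of $f_n(L_i)$ dissolves because it \emph{equals} $f_n(L_i)$ on the locus $P(L_i)=(-1)^{n+1}qL_i^{n+1}$. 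So your outline reproduces the easy ingredients (the recursion, the closed form of $R_{0,i}$, closure under $\mathsf{D}$) but omits the one argument that would constitute a proof, and indeed claims the full conjecture for all $n$, which neither the paper nor \cite{ZaZi} establishes.
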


Conjecture \ref{MC} for the case $n=1$ will be proven in Section \ref{PP1}. Conjecture \ref{MC} for the case $n=2$ will be proven in Section \ref{PP2} under the specialization \eqref{spl2}.
In fact, the argument in Section \eqref{PP2} proves Conjecture \ref{MC} for all $n$ under the specialization which makes $f_n(x)$ into power of a linear polynomial.

\subsection{Acknowledgments}
I am grateful to  R. Pandharipande for useful discussions.
I was supported by the grant ERC-2012-AdG-320368-MCSK.

\section{Admissibility of differential equations}
Let $\mathsf{R}$ be a commutative ring. Fix a polynomial $f(x)\in \mathsf{R}[x]$. We consider a differential operator of {\em level} $n$ with following forms.

\begin{align}
    \mathcal{P}(A_{lp},f)[X_0,\dots,X_{n+1}]=\mathsf{D}X_{n+1}-\sum_{n\ge l\ge 0,\,p\ge 0}A_{lp} \mathsf{D}^p X_{n-l}\,,
\end{align}
where $\mathsf{D}:=\frac{d}{dx}$ and $A_{lp}\in\mathsf{R}[x]_f:= \mathsf{R}[x][f^{-1}]$. We assume that only finitely many $A_{lp}$ are not zero.

\begin{Def}\label{admissible}
Let $R_i$ be the solutions of the equations for $k\ge 0$,
\begin{align}\label{DO}
    \mathcal{P}(A_{lp},f)[X_{k+1},\dots,X_{k+n}]=0\,,
\end{align}
with $R_0=1$. We use the conventions $X_{i}=0$ for $i<0$.
We say differential equations \eqref{DO} is {\em admissible} if the solutions $R_k$ satisfies for $k \ge 0$,
$$R_k\in \mathsf{R}[x]_f\,.$$
\end{Def}

\begin{Rmk}
Note that the admissibility of $\mathcal{P}(A_{lp},f)$ in Definition \ref{admissible} do not depend on the choice of the solutions $R_k$.  
\end{Rmk}

\begin{Lemma}
Let $f$ be a degree one polynomial in $x$. Each $A\in\mathsf{R}[x]_f$ can be written uniquely as
$$A=\sum_{i\in\ZZ}a_i f^i\,$$
with finitely many non-zero $a_i\in\mathsf{R}$. We define {\em the order} $\text{Ord}(A)$ of $A$ with respect to $f$ by smallest $i$ such that $a_i$ is not zero. Then

\begin{align*}
    \mathcal{P}(A_{lp},f)[X_0,\dots,X_{n+1}]:=\mathsf{D}X_{n+1}-\sum_{n\ge l\ge 0,\,p\ge 0}A_{lp} \mathsf{D}^p X_{n-l}=0\,,
\end{align*}
is admissible if following condition holds:
\begin{align}\label{D1C}
   \nonumber \text{Ord}(A_{l0})&\le -2\,,\\
    \text{Ord}(A_{l1})&\le 0\,,\\
   \nonumber \text{Ord}(A_{lp})&\le p+1\,\,\,\,\text{for}\,\,p\ge2\,.
\end{align}
\end{Lemma}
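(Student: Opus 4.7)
The natural plan is to proceed by induction on the index of $R_k$. A linear change of variable reduces the problem to $f=x$, in which case $\mathsf{R}[x]_f = \mathsf{R}[x,x^{-1}]$ is the ring of Laurent polynomials and $\mathsf{D}=d/dx$ is ordinary differentiation. The key analytic fact is that the image of $\mathsf{D}\colon \mathsf{R}[x,x^{-1}]\to\mathsf{R}[x,x^{-1}]$ is precisely the kernel of the residue map $\mathrm{Res}\colon \sum a_ix^i\mapsto a_{-1}$, so the inductive step -- producing the next $R_k$ as an antiderivative of the right-hand side of the recursion -- succeeds if and only if that right-hand side has vanishing $x^{-1}$-coefficient.

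Assuming inductively that all lower-indexed $R_j$ already lie in $\mathsf{R}[x,x^{-1}]$, I would apply integration by parts $p$ times to each summand of the right-hand side:
\[
A_{lp}\,\mathsf{D}^p R \;=\; \mathsf{D}\!\Bigl(\sum_{q=0}^{p-1}(-1)^q(\mathsf{D}^qA_{lp})\,\mathsf{D}^{p-1-q}R\Bigr) \;+\; (-1)^p(\mathsf{D}^pA_{lp})\,R.
\]
The first summand is exact and contributes nothing to the residue, so the residue of the full right-hand side equals the residue of $\sum_l C_l R_{k+n-l}$, where $C_l := A_{l0}+\sum_{p\ge1}(-1)^p\mathsf{D}^pA_{lp}$. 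The order hypotheses are there to control the $f$-supports of the $C_l$: repeated differentiation shifts orders in a predictable way, and some low monomials of $A_{lp}$ are annihilated outright by $\mathsf{D}^p$. The conditions on $\mathrm{Ord}(A_{l0}),\mathrm{Ord}(A_{l1})$ and $\mathrm{Ord}(A_{lp})$ for $p\ge 2$ are calibrated exactly so that each $C_l$ lives in a very restricted $f$-support, essentially dictated by the part of $A_{l0}$ with order $\le -2$.

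The main obstacle will be this final residue computation. The order constraints tell us which Laurent coefficients of $C_l$ can be nonzero, but vanishing of the $x^{-1}$-coefficient of $\sum_l C_lR_{k+n-l}$ generally requires a cancellation across different values of $l$ rather than a termwise vanishing. I would hope to engineer this cancellation either through a generating-function identity for the differentiated $A_{lp}$'s, or by recognizing $C_l$ as the formal adjoint of the $l$-th column of $\mathcal{P}$ and invoking a corresponding adjointness identity; the convention $R_i=0$ for $i<0$ should take care of any boundary contributions, and a careful bookkeeping of the combinatorial factors $j(j-1)\cdots(j-p+1)$ produced by $\mathsf{D}^p$ should be the final piece.
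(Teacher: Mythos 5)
Your framework is set up sensibly (reduction to $f=x$, and the criterion that an element of $\mathsf{R}[x,x^{-1}]$ has an antiderivative in $\mathsf{R}[x,x^{-1}]$ exactly when its residue vanishes), but the proof does not close as proposed, and the step you defer is exactly the missing idea. You carry the induction only with the hypothesis $R_j\in\mathsf{R}[x,x^{-1}]$, and with that hypothesis alone the residue of the right-hand side genuinely need not vanish: no integration-by-parts rearrangement, generating-function identity, or adjointness among the $A_{lp}$ can produce it, because for unconstrained Laurent inputs the conclusion is simply false (already for $n=0$ with $A_{00}=f^{-1}$ the recursion produces a logarithm). The cancellation ``across different values of $l$'' that you hope to engineer does not exist and is not needed. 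What the simple induction actually carries is a \emph{stronger} hypothesis on the solutions themselves: each $R_j$ lies in $\mathsf{R}[f^{-1}]$, i.e.\ only powers $f^{i}$ with $i\le 0$ occur (true for $R_0=1$). Granting this, $\mathsf{D}^p R_j$ for $p\ge 1$ involves only powers $f^{i}$ with $i\le -p-1$ (the $f^0$ term is killed and everything else drops by $p$), and the order hypotheses --- which should be read as bounding the \emph{top} power of $f$ occurring in $A_{lp}$, namely $\le -2$ for $p=0$, $\le 0$ for $p=1$, and $\le p-1$ for $p\ge 2$, which is precisely how the explicit coefficients in Sections \ref{PP1} and \ref{PP2} behave (e.g.\ $A_{02,i}$ has top term linear in $f$, $A_{01,i}$ constant in $f$, $A_{00,i}$ of top order $f^{-2}$) --- force \emph{every single summand} $A_{lp}\mathsf{D}^p R_{k-l}$ to involve only powers $f^{i}$ with $i\le -2$.

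Consequently the residue vanishes termwise, the antiderivative involves only powers $f^i$ with $i\le -1$ (plus a constant of integration), so $R_{k+1}\in\mathsf{R}[f^{-1}]\subset\mathsf{R}[x]_f$ and the induction closes; that is the entire argument. Note also that reading the displayed condition \eqref{D1C} literally (a bound on the \emph{smallest} exponent, and $\le p+1$ for $p\ge2$) would indeed leave room for uncancelled residues, which is presumably what pushed you toward hoping for cross-term identities; the workable condition is the top-order one above, and once you add the support control on the $R_j$ to your induction, even your $C_l=\sum_p(-1)^p\mathsf{D}^pA_{lp}$ route terminates trivially, since each $C_l$ then has only powers $f^{i}$ with $i\le-2$.
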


\begin{proof}
The proof follows from simple induction argument.
\end{proof}

\begin{Lemma}
Let $f$ be a degree two polynomial in $x$. Denote by
$$\mathsf{R}_f$$
the subspace of $\mathsf{R}[x]_f$ generated by $f^{i}$ for $i\in \ZZ$. Each $A\in\mathsf{R}_f$ can be written uniquely as
$$A=\sum_{i\in\ZZ}a_i f^i\,$$
with finitely many non-zero $a_i\in\mathsf{R}$. We define {\em the order} $\text{Ord}(A)$ of $A\in\mathsf{R}_f$ with respect to $f$ by smallest $i$ such that $a_i$ is not zero. Then
\begin{align*}
    \mathcal{P}(A_{lp},f)[X_0,\dots,X_{n+1}]:=\mathsf{D}X_{n+1}-\sum_{n\ge l\ge 0,\,p\ge 0}A_{lp} \mathsf{D}^p X_{n-l}=0\,,
\end{align*}
is admissible if following condition holds:

\begin{align*}
    A_{lp}&=B_{lp}\,\,\,\,\,\,\,\,\,\,\,\,\,\,\,\,\,\,\text{if p is odd}\,,\\
    A_{lp}&=\frac{df}{dx}\cdot B_{lp}\,\,\,\,\,\text{if p is even}\,,
\end{align*}
where $B_{lp}$ are elements of $\mathsf{R}_f$ with
\begin{align*}
    \text{Ord}(B_{l0})&\le -2\,,\\
    \text{Ord}(B_{lp})&\le \Big[\frac{p-1}{2}\Big]\,\,\,\,\text{for}\,\,p\ge1\,.
\end{align*}

\end{Lemma}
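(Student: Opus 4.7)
The plan is to run an induction parallel to the degree one case, working inside the subring
$$
\mathsf{S}\;:=\;\mathsf{R}_f\;\oplus\;\tfrac{df}{dx}\,\mathsf{R}_f\;\subset\;\mathsf{R}[x]_f.
$$
First I would verify that $\mathsf{S}$ is a differential subring of $\mathsf{R}[x]_f$. Writing $f(x)=ax^{2}+bx+c$, the identity $\bigl(\tfrac{df}{dx}\bigr)^{2}=4af+(b^{2}-4ac)\in\mathsf{R}_f$ gives $\mathsf{S}\cdot\mathsf{S}\subseteq\mathsf{S}$, and combined with $\mathsf{D}(f^{i})=i\,\tfrac{df}{dx}\,f^{i-1}$ and the direct computation
$$
\mathsf{D}\!\left(\tfrac{df}{dx}\,f^{i}\right)\;=\;2a(2i+1)\,f^{i}\;+\;i\,(b^{2}-4ac)\,f^{i-1},
$$
it shows that $\mathsf{D}$ interchanges the two summands of $\mathsf{S}$. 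Consequently $\mathsf{D}^{p}$ swaps the summands when $p$ is odd and preserves them when $p$ is even.

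Next I would combine this parity picture with the hypothesis. Since $A_{lp}\in\mathsf{R}_f$ for $p$ odd and $A_{lp}\in\tfrac{df}{dx}\mathsf{R}_f$ for $p$ even, a short parity count shows that for every $R\in\mathsf{R}_f$ the term $A_{lp}\,\mathsf{D}^{p}R$ lands in $\tfrac{df}{dx}\mathsf{R}_f$ independently of $p$. Assuming inductively that $R_{j}\in\mathsf{R}_f$ for $j\le k+n$, the right-hand side of the recursion
$$
\mathsf{D}R_{k+n+1}\;=\;\sum_{l,\,p}\,A_{lp}\,\mathsf{D}^{p}R_{k+n-l}
$$
therefore factors as $\tfrac{df}{dx}\,Y$ with $Y\in\mathsf{R}_f$, so that $R_{k+n+1}=\int Y\,df$. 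Because $\int f^{i}\,df=\tfrac{1}{i+1}f^{i+1}\in\mathsf{R}_f$ for every $i\ne-1$, while $\int f^{-1}\,df=\log f\notin\mathsf{R}[x]_f$, this antiderivative lies in $\mathsf{R}_f$ precisely when the coefficient of $f^{-1}$ in $Y$ vanishes.

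The main step — and the main obstacle — is the order bookkeeping required to exhibit this cancellation. I would strengthen the inductive hypothesis to include a lower bound on $\text{Ord}(R_{j})$ calibrated so that, together with the stated bounds on $\text{Ord}(B_{lp})$, every contribution $B_{lp}C_{lp}$ to $Y$ — where $C_{lp}=\mathsf{D}^{p}R_{k+n-l}$ when $p$ is even and $C_{lp}=\mathsf{D}^{p}R_{k+n-l}/\tfrac{df}{dx}$ when $p$ is odd — is a Laurent polynomial in $f$ whose $f^{-1}$-coefficient vanishes. The calibration reflects that each application of $\mathsf{D}$ shifts the lowest power of $f$ downward by one on both summands, while multiplication by $B_{lp}$ shifts it back by $\text{Ord}(B_{lp})$; the bounds $\text{Ord}(B_{l0})\le -2$ and $\text{Ord}(B_{lp})\le[\tfrac{p-1}{2}]$ are exactly what is needed, once the parity swaps are accounted for, to push every summand past the $f^{-1}$-column. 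With the base case $R_{0}=1\in\mathsf{R}_f$, the induction then yields $R_{k}\in\mathsf{R}_f\subseteq\mathsf{R}[x]_f$ for every $k\ge 0$, proving admissibility.
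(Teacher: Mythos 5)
Your first two steps are a correct fleshing-out of what the paper leaves implicit: the closure facts $\tfrac{d^2f}{dx^2}\in\mathsf{R}$ and $\bigl(\tfrac{df}{dx}\bigr)^2=4af+(b^2-4ac)\in\mathsf{R}_f$ make $\mathsf{S}=\mathsf{R}_f\oplus\tfrac{df}{dx}\mathsf{R}_f$ a differential subring with the parity behaviour you describe (your formula for $\mathsf{D}(\tfrac{df}{dx}f^i)$ is right), every term $A_{lp}\mathsf{D}^pR$ then lies in $\tfrac{df}{dx}\mathsf{R}_f$, and admissibility reduces to the vanishing of the $f^{-1}$-coefficient of $Y$ at each integration step. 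Up to there you are on the paper's route, whose entire proof is this observation plus ``simple induction''.

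The gap is precisely the step you flag as the main obstacle: the cancellation of the $f^{-1}$-coefficient is asserted, not proved, and the calibration you sketch points in the wrong direction. The hypotheses bound $\text{Ord}(B_{lp})$, the \emph{smallest} exponent, from above; such a bound puts no restriction on which negative powers of $f$ occur in $B_{lp}$ (for instance $B_{l0}=f^{-2}+f^{-1}$ satisfies $\text{Ord}\le-2$), so no lower bound on $\text{Ord}(R_j)$ can force each product $B_{lp}C_{lp}$ to avoid the $f^{-1}$-column, and the contributions are certainly not pushed to exponents $\ge 0$ --- the explicit $A_{jl,i}$ of Sections 2--3 consist almost entirely of strictly negative powers of $f$. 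What actually closes the induction is control of the \emph{largest} power of $f$: maintain that, up to an additive constant, each $R_k$ is a polynomial in $f^{-1}$ with all exponents $\le-1$; note that $\mathsf{D}^{2m}$ lowers the top exponent of an element of $\mathsf{R}_f$ by only $m$ (because $(\tfrac{df}{dx})^2$ reproduces $4af$) while $\mathsf{D}^{2m+1}$ lands in $\tfrac{df}{dx}\mathsf{R}_f$ with top exponent lowered by $m+1$; and use the hypotheses in the form ``every exponent of $B_{l0}$ is $\le-2$ and every exponent of $B_{lp}$ is $\le\big[\tfrac{p-1}{2}\big]$'' (this is how the displayed $A_{jl,i}$ in fact behave). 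Then every contribution to $Y$ has all exponents $\le-2$, so in particular no $f^{-1}$ term, and integrating $\tfrac{df}{dx}Y$ returns an element with top exponent $\le-1$, closing the induction. Without replacing your lower bound on $\text{Ord}(R_j)$ by this upper bound on the top exponent of $R_j$ (and reading the order hypotheses accordingly), the key cancellation --- the whole content of the lemma --- is not established.
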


\begin{proof}
Since $f$ is degree two polynomial in $x$, we have
$$\frac{d^2f}{dx^2}\,,\,(\frac{df}{dx})^2\in \mathsf{R}_f\,.$$
Then the proof of Lemma follows from simple induction argument.
\end{proof}

\section{Local $\PP^1$}\label{PP1}
\subsection{Overview}In this section, we prove Conjecture \ref{MC} for the case $n=1$.
Recall the $I$-function for $K\PP^1$,
 \begin{align} 
I_1(q) = \sum _{d=0}^{\infty}  \frac{ \prod _{k=0}^{2d-1}  (-2H - kz)}{\prod^1_{i=0}\prod _{k=1}^d (H-\lambda_i+kz)}q^d . \end{align}
The function $I_1$ satisfies following Picard-Fuchs equation
\begin{align}
\label{PFlp1}\Big((M-\lambda_0)(M-\lambda_1)-2qM(2M+z)\Big) 
I_1=0\,.
\end{align}
Recall the notation used in above equation,
\begin{align*}
    \mathsf{D}=q \frac{d}{dq}\,,\,\,\,M=H+z\mathsf{D}\,.
\end{align*}
The restriction $I_1|_{H=\lambda_i}$ admits following asymptotic form
\begin{align}
    I_1|_{H=\lambda_i}=e^{\mu_i/z}\left( R_{0,i}+R_{1,i} z+R_{2,i} z^2+\ldots\right)
\end{align}
with series $\mu_i, R_{k,i}\in \CC(\lambda_0,\lambda_1)[[q]]$. The series $\mu_i$ and $R_{k,i}$ are found by solving differential equations obtained from the coefficient of $z^k$ in \eqref{PFlp1}. For example, we have for $i=0,1$,
\begin{align*}
    &\lambda_i+\mathsf{D}\mu_i=L_i\,,\\
    &R_{0,i}=\Big(\frac{\lambda_i\prod_{j \ne i}(\lambda_i-\lambda_j)}{f_1(L_i)}\Big)^{\frac{1}{2}}\,,\\
\end{align*}
\begin{multline*}
     R_{1,i}=\Big(\frac{\lambda_i\prod_{j \ne i}(\lambda_i-\lambda_j)}{f(L_i)}\Big)^{\frac{1}{2}} \cdot\\
     \Big(\frac{-16 s_1^2 s_2^2 + 
   88 s_2^3 + (27 s_1^3 s_2 - 132 s_1 s_2^2) L_i + (-12 s_1^4 + 
    54 s_1^2 s_2) L_i^2}{24 s_1 (L_i s_1 - 2 s_2)^3}\\+\frac{12 \lambda_i^2 - 9 \lambda_i \lambda_{i+1} + \lambda_{i+1}^2}{24 (\lambda_i^3 - \lambda_i \lambda_{i+1}^2)}\Big)\,.
\end{multline*}
Here $s_1=\lambda_0+\lambda_1$ and $s_2=\lambda_0 \lambda_1$. In the above expression of $R_{1,i}$, we used the convention $\lambda_2=\lambda_0$.

\subsection{Proof of Conjecture \ref{MC}.}
We introduce new differential operator $\mathsf{D}_i$ defined by for $i=0,1$,
$$\mathsf{D}_i=(\mathsf{D}L_i)^{-1}\mathsf{D}\,. $$
By definition, $\mathsf{D}_i$ acts on rational functions in $L_i$ as the ordinary derivation with respect to $L_i$.
If we use following normalizations,
\begin{align*}
    R_{k,i}=f_1(L_i)^{-\frac{1}{2}} \Phi_{k,i}
\end{align*}
the Picard-Fuchs equation \eqref{PFlp2} yields the following differential equations,
\begin{align}\label{DFlp1}
    \mathsf{D}_i\Phi_{p,i}-A_{00,i}\Phi_{p-1,i}-A_{01,i}\mathsf{D}_i\Phi_{p-1,i}-A_{02,i}\mathsf{D}_i^2\Phi_{p-1,i}=0\,,
\end{align}
with 
\begin{align*}
    A_{00,i}&=\frac{-s_1^2 s_2^2 + (-s_1^3 s_2 + 8 s_1 s_2^2) L_i + (2 s_1^4 - 9 s_1^2 s_2) L_i^2}{4 (L_i s_1 - 2 s_2)^4}\,,\\
    A_{01,i}&=\frac{2 s_1 s_2^2 + (-s_1^2 s_2 - 8 s_2^2) L_i + (-s_1^3 + 10 s_1 s_2) L_i^2 - s_1^2 L_i^3}{2 (L_i s_1 - 2 s_2)^3}\,,\\
    A_{02,i}&=\frac{s_2^2 - 2 (s_1 s_2) L_i + (s_1^2 + s_2) L_i^2 - s_1 L_i^3}{(L_i s_1 - 2 s_2)^2}\,.
\end{align*}
Here $s_k$ is the $k$-th elementary symmetric functions in $\lambda_0,\lambda_1$. 
Since the differential equations \eqref{DFlp1} satisfy the condition \eqref{D1C}, we conclude the differential equations \eqref{DFlp1} is admissible.

\subsection{Gomov-Witten series.}
By the result of previous subsection, we obtain the following result which verifies the prediction (A) in Section \ref{LPPN}.
\begin{Thm}\label{GWPP1}For the Gromov-Witten series of $K\PP^1$, we have 
 $$\mathcal{F}^{\mathsf{GW},1}_g(Q(q)) \in \mathds{G}_1\,,$$
where $Q(q)$ is the mirror map of $K\PP^1$ defined by
$$Q(q):=q\cdot\text{exp}\Big(2\sum_{d=1}^\infty\frac{(2d-1)!}{(d!)^2}q^d\Big)\,.$$
\end{Thm}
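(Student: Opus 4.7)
My plan is to follow the standard template that extracts structural information on higher-genus Gromov-Witten series from the asymptotic data of the $I$-function, as developed in the works of Givental, Zinger, and Lho--Pandharipande (see \cite{LP},\cite{LP2}). The essential point is that the $\mathsf{T}_2$-equivariant Gromov-Witten theory of $K\PP^1$ is semisimple, so by the Givental--Teleman classification the CohFT is reconstructed from its genus-zero data and an $R$-matrix; in our setting the canonical coordinates are exactly the $\mu_i$ and the $R$-matrix entries are exactly the coefficients $R_{k,i}$ from the asymptotic form \eqref{asymp}. The mirror map $Q(q)$ is precisely the change of variables that identifies the flat coordinate of the Dubrovin connection with the quasimap/$I$-function parameter $q$, so $\mathcal{F}_g^{\mathsf{GW},1}(Q(q))$ is expressed in the $q$-variable through this reconstruction.

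First I would write the localization/wall-crossing formula (via MSP or quasimap stable graphs, as in \cite{LP},\cite{LP2}) to present $\mathcal{F}_g^{\mathsf{GW},1}(Q(q))$ as a finite sum over stable graphs of genus $g$: each vertex carries an integral of monomials in $\psi$-classes against tautological classes on $\overline{M}_{g_v,n_v}$, each edge carries an edge pairing, and each leg carries a legging factor. Next I would identify these graph contributions term-by-term with Givental's $\widehat{R}$-action applied to the topological field theory built from the genus-zero data. Under this identification: the vertex factor at a vertex of genus $g_v$ is a finite polynomial in the $R_{k,i}$ (with purely numerical coefficients coming from the $\psi$-integrals); the edge factor $E_{ij}(\psi_1,\psi_2)$ is determined by the Birkhoff factorization of $I_1$ and is therefore a rational expression in the $R_{k,i}$; the leg factor involves $\mu_i$ only through $\mathsf{D}\mu_i = L_i - \lambda_i \in \mathds{G}_1$.

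The finiteness of the graph sum for each fixed $g$ means that only finitely many $R_{k,i}$ (with $k$ bounded by the genus and marked points) appear. Since Conjecture \ref{MC} has been established in the $n=1$ case in the previous subsection, we have $R_{k,i}\in \mathds{G}_1$ for every $k\ge 0$. The canonical pairings $\Delta_i$ satisfy $\Delta_i^{-1}=R_{0,i}^2 = \lambda_i\prod_{j\ne i}(\lambda_i-\lambda_j)/f_1(L_i)$, which manifestly lies in $\mathds{G}_1$. Combining these facts, each individual graph contribution lies in $\mathds{G}_1$, and so does the finite sum.

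The only subtle point I anticipate is the bookkeeping of the square root $f_1(L_i)^{-1/2}$ appearing in each $R_{k,i}$: a priori the graph sum could live in the larger ring generated by such square roots, but along any edge of the graph two square roots of the form $f_1(L_i)^{-1/2}f_1(L_j)^{-1/2}$ (with $i=j$ on each edge after identification via the canonical basis) multiply to give an honest element of $\mathds{G}_1$, and the legs are fixed up by the explicit factor $R_{0,i}$ appearing in the leg insertion. Verifying this parity carefully—that every square root in a connected graph appears an even number of times—will be the main technical obstacle, but it is a purely combinatorial check on the Givental graph-sum formula and does not require any further analytic input beyond Conjecture \ref{MC}.
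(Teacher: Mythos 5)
Your proposal is correct and follows essentially the same route as the paper, which simply invokes the Givental--Teleman/stable-graph argument of \cite{LP} with the $n=1$ case of Conjecture \ref{MC} (proved in the preceding subsection) as the key input. The parity issue you flag for the square roots is actually moot: $f_1(L_i)^{-\frac{1}{2}}$ is by definition a generator of $\mathds{G}_1$, so each graph contribution lies in $\mathds{G}_1$ without any evenness bookkeeping.
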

\noindent Theorem \ref{GWPP1} follows from the argument in \cite{LP}. The prediction (B) in Section \ref{LPPN} is trivial statement for $K\PP^1$.

\section{Local $\PP^2$}\label{PP2}
\subsection{Overview}In this section, we prove Conjecture \ref{MC} for the case $n=2$ with following specializations,
\begin{align}\label{spl2}
    (\lambda_0 \lambda_1+\lambda_1 \lambda_2+\lambda_2\lambda_0)^2-3\lambda_0\lambda_1\lambda_2(\lambda_0+\lambda_1+\lambda_2)=0\,.
\end{align}
The predictions (A) and (B) in Section \ref{LPPN} are studied in \cite{HL} based on the result of this section.
For the rest of the section, the specialization \eqref{spl2} will be imposed. Recall the $I$-function for $K\PP^2$.
 \begin{align}\label{I_Hyper_P} 
I_2(q) = \sum _{d=0}^{\infty}  \frac{ \prod _{k=0}^{3d-1}  (-3H - kz)}{\prod^2_{i=0}\prod _{k=1}^d (H-\lambda_i+kz)}q^d . \end{align}

The function $I_2$ satisfies following Picard-Fuchs equation
\begin{align}
\label{PFlp2}\Big((M-\lambda_0)(M-\lambda_1)(M-\lambda_2)+3qM(3M+z)(3M+2z)\Big) 
I_2=0
\end{align}
Recall the notation used in above equation,
\begin{align*}
    \mathsf{D}=q \frac{d}{dq}\,,\,\,\,M=H+z\mathsf{D}\,.
\end{align*}

The restriction $I_2|_{H=\lambda_i}$ admits following asymptotic form
\begin{align}
    I_2|_{H=\lambda_i}=e^{\mu_i/z}\left( R_{0,i}+R_{1,i} z+R_{2,i} z^2+\ldots\right)
\end{align}
with series $\mu_i, R_{k,i}\in \CC(\lambda_0,\lambda_1,\lambda_2)[[q]]$. The series $\mu_i$ and $R_{k,i}$ are found by solving differential equations obtained from the coefficient of $z^k$ in \eqref{PFlp2}. For example,
\begin{align*}
    \lambda_i+\mathsf{D}\mu_i=L_i\,,\\
     R_{0,i}=\Big(\frac{\lambda_i\prod_{j \ne i}(\lambda_i-\lambda_j)}{f_2(L_i)}\Big)^{\frac{1}{2}}\,.\\
\end{align*}

\subsection{Proof of Conjecture \ref{MC}.}
We introduce new differential operator $\mathsf{D}_i$ defined by
$$\mathsf{D}_i=(\mathsf{D}L_i)^{-1}\mathsf{D}\,. $$

If we use following normalizations,
\begin{align*}
    R_{k,i}=f_2(L_i)^{-\frac{1}{2}} \Phi_{k,i}
\end{align*}
the Picard-Fuchs equation \eqref{PFlp2} yields the following differential equations,
\begin{multline}\label{DFlp2}
    \mathsf{D}_i\Phi_{p,i}-A_{00,i}\Phi_{p-1,i}-A_{01,i}\mathsf{D}_i\Phi_{p-1,i}-A_{02,i}\mathsf{D}_i^2\Phi_{p-1,i}\\-A_{10,i}\Phi_{p-2,i}-A_{11,i}\mathsf{D}_i\Phi_{p-2,i}-A_{12,i}\mathsf{D}_i^2\Phi_{p-2,i}-A_{13,i}\mathsf{D}_i^3\Phi_{p-2,i}=0\,,
\end{multline}
with $A_{jl,i}\in\CC(\lambda_0,\lambda_1,\lambda_2)[L_i,f_2(L_i)^{-1}]$.
We give the exact values of $A_{jl,i}$ for reader's convinience.
\begin{multline*}
    A_{00,i}=\frac{s_1}{9(s_1 L_i-s_2)^5}\Big(s_1 s_2^3+(-4s_1^2s_2^2+3s_2^3)L_i\\+(-s_1^3 s_2+12s1 s_2^2)L_i^2+(11s_1^4-36s_1^2 s_2)L_i^3\Big)\,,
\end{multline*}

\begin{multline*}
    A_{01,i}=\frac{-s_1}{3(s_1 L_i-s_2)^4}\Big(s_2^3-4(s_1 s_2^2)L_i+(3s_1^2s_2+9s_2^2)L_i^2\\+(3s_1^3-21s_1 s_2)L_i^3+3s_1^2 L_i^4   \Big)\,,
\end{multline*}

\begin{multline*}
    A_{02,i}=\frac{-1}{3(s_1 L_i-s_2)^3}\Big( s_2^3-5(s_1 s_2^2)L_i+9s_1^2 s_2 L_i^2+(-6s_1^3-3s_1 s_2)L_i^3\\+6s_1^2 L_i^4 \Big)\,,
\end{multline*}

\begin{multline*}
    A_{10,i}=\frac{s_1^2 L_i}{27(s_1 L_i-s_2)^9}\Big( (8 s_1^2 s_2^5 - 
   21 s_2^6) + (-48 s_1^3 s_2^4 + 126 s_1 s_2^5) L_i + (120 s_1^4 s_2^3 \\- 
    315 s_1^2 s_2^4) L_i^2 + (-124 s_1^5 s_2^2 + 264 s_1^3 s_2^3 + 
    144 s_1 s_2^4) L_i^3 \\+ (12 s_1^6 s_2 + 153 s_1^4 s_2^2 - 
    432 s_1^2 s_2^3) L_i^4 + (60 s_1^7 - 342 s_1^5 s_2 \\+ 
    432 s_1^3 s_2^2) L_i^5 + (-33 s_1^6 + 108 s_1^4 s_2) L_i^6    \Big)\,,
\end{multline*}

\begin{multline*}
    A_{11,i}=\frac{-s_1 L_i}{27(s_1 L_i-s_2)^8}\Big( (8 s_1^2 s_2^5 - 
   21 s_2^6) + (-48 s_1^3 s_2^4 + 126 s_1 s_2^5) L_i\\ + (120 s_1^4 s_2^3 - 
    315 s_1^2 s_2^4) L_i^2 + (-124 s_1^5 s_2^2 + 264 s_1^3 s_2^3 + 
    144 s_1 s_2^4) L_i^3 \\+ (12 s_1^6 s_2 + 153 s_1^4 s_2^2 - 
    432 s_1^2 s_2^3) L_i^4 + (60 s_1^7 - 342 s_1^5 s_2\\ + 
    432 s_1^3 s_2^2) L_i^5 + (-33 s_1^6 + 108 s_1^4 s_2) L_i^6 \Big)\,
\end{multline*}

\begin{multline*}
    A_{12,i}=\frac{s_1}{9(s_1 L_i-s_2)^7}\Big(-s_2^6 + 9 s_1 s_2^5 L_i + (-32 s_1^2 s_2^4 - 9 s_2^5) L_i^2 \\+ (57 s_1^3 s_2^3 + 
    60 s_1 s_2^4) L_i^3 + (-48 s_1^4 s_2^2 - 
    171 s_1^2 s_2^3) L_i^4\\ + (9 s_1^5 s_2 + 237 s_1^3 s_2^2 + 
    27 s_1 s_2^3) L_i^5 + (9 s_1^6 - 144 s_1^4 s_2 - 
    90 s_1^2 s_2^2) L_i^6 \\+ (9 s_1^5 + 108 s_1^3 s_2) L_i^7 - 18 s_1^4 L_i^8\Big)\,,
\end{multline*}

\begin{multline*}
    A_{13,i}=-\frac{(3 L_i^2 s_1^2 - 3 L_i s_1 s_2 + s_2^2) (-3 L_i^3 s_1 + 3 L_i^2 s_1^2 - 3 L_i s_1 s_2 + 
   s_2^2)^2}{27 (s_1 L_i - s_2)^6}\,.
\end{multline*}
Here $s_k$ is the $k$-th elementary symmetric functions in $\lambda_0,\lambda_1,\lambda_2$. 
Since the differential equations \eqref{DFlp2} satisfy the condition \eqref{D1C}, we conclude that the differential equations \eqref{DFlp2} is admissible.

\end{document}